\newtheorem{theorem}[equation]{Theorem}
\newtheorem{corollary}[equation]{Corollary}
\newtheorem{lemma}[equation]{Lemma}
\newtheorem{proposition}[equation]{Proposition}
\theoremstyle{definition}
\numberwithin{equation}{section}
\newcommand{\ZZ}{\mathbb{Z}}
\begin{document}
\setlength{\jot}{0pt} 
\title{Group bases for some solvable groups and semidirect products}

\today
\author{Bret Benesh and Jason Lutz}

\begin{abstract}
A set $B$ is a basis for a vector space $V$ if every element of $V$ can be uniquely written as a linear combination of the elements of $B$.  There is a similar definition of a basis for a finite group.  We show that certain semidirect products of finite groups---including all semidirect products of finite abelian groups---have bases; any group of order $m$ or $2m$ for odd, cube-free $m$ has a basis; and the quaternions do not have a basis.  
\end{abstract}

\maketitle

\begin{section}{Introduction and Preliminaries}
Let $G$ be a finite group, and suppose there exists $(b_1,\ldots,b_n) \in (G \setminus \{e\})^n$ such that for every $g \in G$, there are unique $a_i \in \{0,\ldots,|b_i|-1\}$ such that $g=b_1^{a_1}\cdots b_n^{a_n}$.  In this case, we say that $(b_1,\ldots,b_n)$ is a \emph{basis} for $G$ and $(a_1,\ldots,a_n)$ is a \emph{representation} for $g$, and it is easy to see that $|b_1| \cdots |b_n|=|G|$.  If $G=\{e\}$, we say that $G$ has a \emph{trivial basis}.  

Philip Hall~\cite{hall1934contribution} introduced the idea of a group basis, and Charles Hopkins~\cite{hopkins1937concerning} proved that dihedral, symmetric, and alternating groups all have bases.   There are several other families of groups that have natural bases.  If $G=\langle b \rangle$ is cyclic, then $(b)$ is easily seen to be a basis of $G$.  The Fundamental Theorem of Abelian Groups states that if $A$ is a finite abelian group, then there are $b_i \in A$ and prime powers $q_i$ such that $A \cong \langle b_1 \rangle \times \ldots \langle b_m \rangle$, where $|b_i|=q_i$.  It is easy then to see that $(b_1,\ldots,b_m)$ is a basis for $A$.  Finally, groups can have more than one basis:  $\left((1,1)\right)$ and $\left((1,0),(0,1)\right)$ are both bases for $\ZZ_2 \times \ZZ_3$.


A group basis is a special case of the following definition by Magliveras~\cite{PGMIntro}: Let $G$ be a finite group, and let $\alpha$ be a sequence $(\alpha_1,\ldots,\alpha_n)$, where each $\alpha_i$ is itself a sequence $(a_{i,1},\ldots,a_{i,r_i})$ of elements of $G$.  Then $\alpha$ is said to be a \emph{logarithmic signature} for $G$ if $|G|=r_1  \cdots r_n$ and each $g \in G$ is uniquely represented as a product $g=a_{1,j_i} \cdots a_{n,j_n}$ with $a_{i,j_i} \in \alpha_i$.  Logarithmic signatures have been considered for cryptographic systems in~\cite{MST3Intro},~\cite{MagliverasNewApproaches}, and~\cite{MST3Analysis}.  Then a basis for a finite group $G$ is a logarithmic signature $(\alpha_1,\ldots,\alpha_n)$ such that each $\alpha_i$ is a cyclic subgroup of $G$. All solvable groups have a logarithmic signature~\cite{vasco2002obstacles}.  For instance, recall that the quaternions $Q_8$ is the set $\{\pm 1, \pm i, \pm j, \pm k\}$, where $|1|=1$, $|-1|=2$ and $|\pm x|=4$ for $x \in \{i,j,k\}$.     One such logarithmic signature for $Q_8$ is $\left((1,i,-1,-i), (1,j)\right)$.  However, this logarithmic signature is not a basis because $\{1,j\}$ is not equal to a cyclic subgroup of $Q_8$. In fact, the proposition below demonstrates that $Q_8$ has no basis, so we say that that $Q_8$ is \emph{basisless}.

\begin{proposition}\label{prop:quaternions}
The quaternions $Q_8$ do not have a basis.
\end{proposition}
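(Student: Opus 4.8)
The plan is to exploit the special structure of $Q_8$: it has order $8$, contains no element of order $8$, has a \emph{unique} involution, namely $-1$, and every element of order $4$ squares to $-1$. Suppose $(b_1,\ldots,b_n)$ were a basis. From $|b_1|\cdots|b_n|=|Q_8|=8$ together with $b_i\neq e$, each $|b_i|$ lies in $\{2,4,8\}$; order $8$ is excluded since $Q_8$ has no such element, so each $|b_i|\in\{2,4\}$. This forces $n\le 3$ (a product of at least four factors, each $\ge 2$, would exceed $8$) and rules out $n=1$ (which would need an element of order $8$). Hence the multiset $\{|b_1|,\ldots,|b_n|\}$ is either $\{2,2,2\}$ or, for $n=2$, a permutation of $(2,4)$. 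The whole proof is then a short case analysis on these two possibilities.

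For the case $n=3$ with all orders equal to $2$: since $-1$ is the only element of order $2$ in $Q_8$, we must have $b_1=b_2=b_3=-1$, and then every product $b_1^{a_1}b_2^{a_2}b_3^{a_3}$ lies in $\{1,-1\}$, so at most two of the eight elements are represented --- a contradiction. For the case $n=2$, the basis element of order $2$ must be $-1$ and the one of order $4$ is some $x\in\{\pm i,\pm j,\pm k\}$ with $x^2=-1$. If the ordered basis is $(-1,x)$, then the exponent pairs $(a_1,a_2)=(1,0)$ and $(0,2)$ both represent $-1$; if it is $(x,-1)$, then $(a_1,a_2)=(2,0)$ and $(0,1)$ both represent $-1$. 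Either way uniqueness of representations fails, so no basis exists.

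The only place requiring care is the opening bookkeeping: confirming that the enumeration of possible order multisets is genuinely exhaustive, i.e. correctly using the absence of an order-$8$ element to kill $n=1$ and the product-of-orders identity to bound $n\le 3$. Once that is pinned down, every surviving case collapses immediately from the uniqueness of the involution $-1$, so there is no substantive difficulty beyond this.
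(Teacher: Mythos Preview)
Your proof is correct and follows essentially the same approach as the paper's: both reduce to the possible order multisets via $|b_1|\cdots|b_n|=8$ and the absence of an element of order $8$, then use the uniqueness of the involution $-1$ and the fact that every order-$4$ element squares to $-1$ to exhibit two distinct representations of $-1$. Your treatment of the $n=3$ case is actually slightly more explicit than the paper's, which jumps directly from $n\in\{2,3\}$ to the two ordered pairs $(-1,b)$ and $(b,-1)$ without spelling out why three copies of $-1$ fail.
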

\begin{proof}
Suppose $Q_8$ has a basis $(b_1,\ldots,b_n)$.  Then $|b_1| \cdots |b_n|=8$, and we see that $|b_i| \in \{2,4,8\}$ with $ 1 \leq n \leq 3$. Since $Q_8$ is noncyclic, we can refine this to $|b_i| \in \{2,4\}$ and $n \in \{2,3\}$.  Since $Q_8$ has a unique element $-1$ of order $2$, we conclude that the basis is either $(-1,b)$ or $(b,-1)$ for some $b \in Q_8$ with $|b|=4$.  Then $b^2$ has order $2$, so $b^2=-1$.  If the basis is $(-1,b)$, we then have $(0,2)$ and $(1,0)$ are both representations for $-1$, which contradicts the uniqueness of representations of the basis.  Similarly, $(0,1)$ and $(2,0)$ are representations for $-1$ for the basis $(b,-1)$, which is also a contradiction.  Therefore, no basis can exist for $Q_8$. 
\end{proof}

Groups involving $Q_8$ can still have a basis.  For instance, the semidihedral group of order $16$ with presentation $\langle a,x \mid a^8=x^2=e, x^{-1}ax=a^{-1}\rangle$ has a normal subgroup $\langle a^2,ax \rangle$ isomorphic to the quaternions, yet has a basis $(a,x)$.  Also, there is a group $G$ of order $16$ with presentation $\langle a,x \mid a^4=x^4=e,x^{-1}ax=a^{-1}\rangle$ such that $G/\langle a^2x^2\rangle \cong Q_8$, yet $G$ has a basis $(a,x)$ (Corollary~\ref{cor:dihedral} also demonstrates that $G$ has a basis, since $G$ is isomorphic to the semidirect product $\mathbb{Z}_4 \rtimes \mathbb{Z}_4$ with an inversion action).  Finally, the symmetric group $S_8$ has a non-normal subgroup isomorphic to the quaternions, and $S_8$ has a basis~\cite{hopkins1937concerning}.  So it is possible for a group with a basisless subgroup (normal or otherwise) or basisless quotient to have a basis.  


We end this section with an important lemma.

\begin{lemma}\label{lem:SubgroupCoset}
Let $G$ be a finite group, and let $H$ be a subgroup of $G$ with a basis $(h_1,\ldots,h_m)$.  Suppose there exists $b_1,\ldots,b_n \in G\setminus H$ such that, for all $x=b_1^{c_1}\cdots b_n^{c_n}$ and $y=b_1^{k_1}\cdots b_n^{k_n}$ with $c_i,k_i \in \{0,\ldots,|b_i|-1\}$, the following hold: 

\begin{enumerate}
\item\label{item:powers} $x=y$ if and only if $(c_1,\ldots,c_n)=(k_1,\ldots,k_n)$,
\item\label{item:coset} $xy^{-1} \in H$ if and only if $x=y$, and 
\item\label{item:size} $|G:H| = |b_1|\cdots |b_n|$.
\end{enumerate}

\noindent
Then $B=(h_1,\ldots,h_m,b_1,\ldots,b_n)$ is a basis for $G$.
\end{lemma}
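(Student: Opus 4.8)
The plan is to reduce the statement to a single injectivity check by a counting argument. First I would note that every entry of $B$ is a nonidentity element: the $h_j$ because $(h_1,\ldots,h_m)$ is a basis for $H$, and the $b_i$ because $b_i \in G \setminus H$ and $e \in H$. Next I would compute the product of the orders of the entries of $B$: since $(h_1,\ldots,h_m)$ is a basis for $H$ we have $|h_1|\cdots|h_m| = |H|$, and hypothesis~\eqref{item:size} gives $|b_1|\cdots|b_n| = |G:H|$, so the product equals $|H|\cdot|G:H| = |G|$. Therefore the set $T$ of exponent tuples $(a_1,\ldots,a_m,c_1,\ldots,c_n)$ with $a_j \in \{0,\ldots,|h_j|-1\}$ and $c_i \in \{0,\ldots,|b_i|-1\}$ has exactly $|G|$ elements, and the evaluation map $\varepsilon\colon T \to G$ sending such a tuple to $h_1^{a_1}\cdots h_m^{a_m} b_1^{c_1}\cdots b_n^{c_n}$ is a bijection as soon as it is injective. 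So it remains only to show $\varepsilon$ is injective, i.e.\ that representations are unique; existence for every $g \in G$ then follows automatically from the finiteness of $G$.

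For injectivity, I would suppose $h_1^{a_1}\cdots h_m^{a_m} b_1^{c_1}\cdots b_n^{c_n} = h_1^{a_1'}\cdots h_m^{a_m'} b_1^{c_1'}\cdots b_n^{c_n'}$ and set $h = h_1^{a_1}\cdots h_m^{a_m}$, $h' = h_1^{a_1'}\cdots h_m^{a_m'} \in H$, $x = b_1^{c_1}\cdots b_n^{c_n}$, and $y = b_1^{c_1'}\cdots b_n^{c_n'}$. The equation becomes $hx = h'y$, whence $xy^{-1} = h^{-1}h' \in H$. Hypothesis~\eqref{item:coset} then forces $x = y$, and hypothesis~\eqref{item:powers} refines this to $(c_1,\ldots,c_n) = (c_1',\ldots,c_n')$. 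Cancelling $x = y$ in $hx = h'y$ gives $h = h'$, and because $(h_1,\ldots,h_m)$ is a basis for $H$ this yields $(a_1,\ldots,a_m) = (a_1',\ldots,a_m')$. Hence the two tuples agree, $\varepsilon$ is injective, and the lemma follows.

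I do not expect a genuine obstacle: conditions~\eqref{item:powers} and~\eqref{item:coset} are tailored precisely so that the ``$b$-part'' and the ``$H$-part'' of a product cannot interfere, which is what makes the injectivity argument split so cleanly, and condition~\eqref{item:size} is exactly what is needed for the cardinality count. The only point demanding a little care is the logical step that deduces existence of representations from their uniqueness, which rests on the identity $|T| = |h_1|\cdots|h_m|\cdot|b_1|\cdots|b_n| = |G|$ together with $G$ being finite.
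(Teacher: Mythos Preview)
Your proof is correct and follows essentially the same approach as the paper's: both use conditions~(\ref{item:powers}) and~(\ref{item:coset}) to separate the $b$-part from the $H$-part, and condition~(\ref{item:size}) for the cardinality match. The only organizational difference is that the paper first sets up a bijection between $b$-tuples and right cosets of $H$ and then reads off existence and uniqueness, whereas you prove injectivity of the full evaluation map directly and obtain surjectivity from the count $|T|=|G|$.
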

\begin{proof}

We start by showing that every coset of $H$ can be written as $Hb_1^{c_1}\cdots b_n^{c_n}$ for some $(c_1,\ldots,c_n) \in \prod_{i=1}^n\{0,\ldots,|b_i|-1\}$.  Let $\psi$ be a map from $\prod_{i=1}^n\{0,\ldots,|b_i|-1\}$ to the set of right cosets of $H$ defined by $\psi(c_1,\ldots,c_n)=Hb_1^{c_1}\cdots b_n^{c_n}$.  Then $\psi$ is injective by Items~(\ref{item:powers}) and~(\ref{item:coset}) and surjective by Item~(\ref{item:size}), giving the result.  

 
Let $g \in G$. Then $g$ appears in exactly one coset of $H$, so let $(c_1,\ldots,c_n)$ be such that $g \in Hb_1^{c_1}\cdots b_n^{c_n}$.  Then $g=hb_1^{c_1}\cdots b_n^{c_n}$ for some $h \in H$, and we can write $h$ uniquely as $h_1^{l_1}\cdots h_m^{l_m}$ for some $(l_1,\ldots,l_m)$.  Then \[g=h_1^{l_1}\cdots h_m^{l_m} \cdot b_1^{c_1} \cdots b_n^{c_n}\] for unique $l_i \in \{0,\ldots,|h_i|-1\}$ and unique $c_i \in \{0,\ldots,|b_i|-1\}$.  Therefore, $(h_1,\ldots,h_m,b_1,\ldots,b_n)$ is a basis for $G$.  
\end{proof}
\end{section}

\begin{section}{Families of groups}
\begin{subsection}{Semidirect products of abelian groups}
Let $H$ and $N$ be finite groups such that there is a map $\phi: H \to \operatorname{Aut}(N)$, and let $\phi_{h}:N \to N$ denote $\phi(h)$. Recall that a \emph{semidirect product $N \rtimes_{\phi} H$} is defined to be the set $\{(h,n) \mid h \in H, n \in N\}$ together with the operation defined by $(h_1,n_1)(h_2,n_2)=(h_1h_2,\phi_{h_2}(n_1)n_2)$ for all $h_i \in H$ and $n_i \in N$.  

The following results generalizes the results about dihedral groups from~\cite{hopkins1937concerning}.

\begin{theorem} 
Let $H$ and $K$ be finite groups with bases such that there is a homomorphism $\phi:H \to \operatorname{Aut}(K)$.  If $G=K \rtimes_{\phi} H$, then $G$ has a basis.  
\end{theorem}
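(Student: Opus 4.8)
The plan is to apply Lemma~\ref{lem:SubgroupCoset} with the copy of $K$ sitting inside $G=K\rtimes_\phi H$ as the normal subgroup, namely $\bar K=\{(e_H,n)\mid n\in K\}$, and with the $b_i$ coming from a basis of $H$ lifted to $G$. Concretely, if $(k_1,\ldots,k_m)$ is a basis for $K$ and $(h_1,\ldots,h_n)$ is a basis for $H$, I would set $\tilde k_j=(e_H,k_j)$ and $\tilde h_i=(h_i,e_K)$, check that $(\tilde k_1,\ldots,\tilde k_m)$ is a basis for $\bar K\cong K$ (immediate, since the first coordinate stays $e_H$ and the second multiplies as in $K$), and then verify the three hypotheses of the lemma for the elements $\tilde h_1,\ldots,\tilde h_n\in G\setminus\bar K$.

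For the verification, the key observation is that the projection $\pi\colon G\to H$, $(h,n)\mapsto h$, is a surjective homomorphism with kernel exactly $\bar K$. A product $\tilde h_1^{c_1}\cdots\tilde h_n^{c_n}$ maps under $\pi$ to $h_1^{c_1}\cdots h_n^{c_n}$, because $\pi$ is a homomorphism and $\pi(\tilde h_i)=h_i$. Now item~(\ref{item:powers}): if $\tilde h_1^{c_1}\cdots\tilde h_n^{c_n}=\tilde h_1^{k_1}\cdots\tilde h_n^{k_n}$ then applying $\pi$ gives $h_1^{c_1}\cdots h_n^{c_n}=h_1^{k_1}\cdots h_n^{k_n}$ in $H$, so $(c_i)=(k_i)$ since $(h_1,\ldots,h_n)$ is a basis of $H$; the converse is trivial. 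Item~(\ref{item:coset}): $xy^{-1}\in\bar K=\ker\pi$ iff $\pi(x)=\pi(y)$ iff $h_1^{c_1}\cdots h_n^{c_n}=h_1^{k_1}\cdots h_n^{k_n}$, which by the basis property of $H$ happens iff $(c_i)=(k_i)$, i.e.\ iff $x=y$ by item~(\ref{item:powers}). Item~(\ref{item:size}): $|G:\bar K|=|H|=|h_1|\cdots|h_n|$ since $(h_1,\ldots,h_n)$ is a basis for $H$. Then Lemma~\ref{lem:SubgroupCoset} yields that $(\tilde k_1,\ldots,\tilde k_m,\tilde h_1,\ldots,\tilde h_n)$ is a basis for $G$.

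One small point I would be careful about is that $\tilde h_i\notin\bar K$: this holds because $\pi(\tilde h_i)=h_i\neq e_H$, as the entries of a basis are required to be nontrivial. I would also note $|\tilde h_i|=|h_i|$ (since $\pi$ restricted to $\langle\tilde h_i\rangle$ is injective onto $\langle h_i\rangle$, as $(e_H,n)$-type elements form the kernel and $\tilde h_i$ has trivial intersection with $\bar K$ — or more directly, $\tilde h_i^{c}=(h_i^c,\ast)$, which is $e_G$ iff $h_i^c=e_H$ and the second coordinate vanishes, and one checks the second coordinate does vanish when $h_i^c=e_H$), so that the ranges $\{0,\ldots,|\tilde h_i|-1\}$ match $\{0,\ldots,|h_i|-1\}$ as required by the lemma; alternatively this is subsumed by the bijectivity argument inside Lemma~\ref{lem:SubgroupCoset}.

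I do not expect any serious obstacle here: the entire content is that the projection to $H$ is a homomorphism killing the normal factor, which converts all three hypotheses of Lemma~\ref{lem:SubgroupCoset} into the already-known basis property of $H$. If there is a subtlety, it is purely bookkeeping about orders and the nontriviality of basis elements, which the remarks above dispatch. The corollary that every semidirect product of finite abelian groups has a basis follows immediately, since finite abelian groups have bases by the Fundamental Theorem of Abelian Groups as recalled in the introduction.
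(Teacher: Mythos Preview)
Your proposal is correct and follows essentially the same route as the paper: apply Lemma~\ref{lem:SubgroupCoset} with the normal copy of $K$ as the subgroup and the lifted basis elements of $H$ as the $b_i$. The only cosmetic difference is that you phrase item~(\ref{item:coset}) via the projection $\pi\colon G\to H$ with kernel $\bar K$, while the paper phrases it via $H\cap K=\{(e,e)\}$; you are also more explicit about the order equality $|\tilde h_i|=|h_i|$, which the paper leaves implicit in its identification of $H$ with its image in $G$.
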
 
\begin{proof}
Let $(h_1,\ldots,h_n)$ be a basis for $H$ and  $(k_1,\ldots,k_m)$ be a basis for $K$.   Then $h_1^{a_1}\cdots h_n^{a_n}=h_1^{c_1}\cdots h_n^{c_n}$ for some $a_i,c_i \in \{0,\ldots,|h_i|-1\}$ if and only if $(a_1,\ldots,a_n)=(c_1,\ldots,c_n)$ since $(h_1,\ldots,h_n)$ is a basis for $H$, $(h_1^{a_1}\cdots h_n^{a_n})(h_1^{c_1}\cdots h_n^{c_n})^{-1}\in K$ if and only if $h_1^{a_1}\cdots h_n^{a_n}=h_1^{c_1}\cdots h_n^{c_n}$ since $H \cap K = \{(e,e)\}$, and $|G:K| = |H| =  |h_1|\cdots |h_n|$.  Therefore, the $(n+m)$-tuple $(k_1,\ldots,k_m,h_1,\ldots,h_n)$ is a basis for $G$ by Lemma~\ref{lem:SubgroupCoset}.
\end{proof}

The following corollary holds because $A$ and $B$ have bases as a consequence of the Fundamental Theorem of Abelian Groups.  In particular, every dihedral group has a basis.

\begin{corollary}\label{cor:dihedral}
Every finite group isomorphic to $A \rtimes_{\phi} B$ for abelian groups $A$ and $B$ and some map $\phi: B \to \operatorname{Aut}(A)$ has a basis.
\end{corollary}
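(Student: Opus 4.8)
The plan is to obtain this as an immediate consequence of the preceding theorem together with the Fundamental Theorem of Abelian Groups. First I would recall the observation already made in the introduction that every finite abelian group has a basis: writing a finite abelian group $A$ as $\langle b_1 \rangle \times \cdots \times \langle b_m \rangle$ with each $|b_i|$ a prime power, the tuple $(b_1,\ldots,b_m)$ is a basis for $A$, because uniqueness of the direct-product decomposition together with the fact that each $\langle b_i\rangle$ is cyclic forces every element of $A$ to be $b_1^{a_1}\cdots b_m^{a_m}$ for unique $a_i \in \{0,\ldots,|b_i|-1\}$. In particular, both $A$ and $B$ in the statement of the corollary have bases.

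Next I would simply invoke the preceding theorem with $K = A$ and $H = B$. Since $A$ and $B$ are finite groups with bases and $\phi \colon B \to \operatorname{Aut}(A)$ is a homomorphism, the theorem produces a basis for $G = A \rtimes_{\phi} B$, which is exactly the claim. No further construction is needed, since the basis is assembled explicitly in the proof of the theorem via Lemma~\ref{lem:SubgroupCoset}.

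For the parenthetical assertion that every dihedral group has a basis, I would note that the dihedral group of order $2n$ is isomorphic to $\mathbb{Z}_n \rtimes_{\phi} \mathbb{Z}_2$, where $\phi$ carries the nontrivial element of $\mathbb{Z}_2$ to the inversion automorphism of $\mathbb{Z}_n$; as $\mathbb{Z}_n$ and $\mathbb{Z}_2$ are abelian, this is a special case of the corollary just established.

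I do not expect a genuine obstacle here, as the substantive work is already contained in the theorem and in Lemma~\ref{lem:SubgroupCoset}. The only point deserving a word of care is confirming that the decomposition furnished by the Fundamental Theorem of Abelian Groups really yields a basis in the technical sense used in this paper—unique exponents lying in the prescribed ranges—but that is precisely the remark recorded in the introduction, so it can be cited rather than reproved.
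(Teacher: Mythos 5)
Your proposal is correct and follows exactly the paper's route: the corollary is deduced immediately from the preceding theorem by noting that $A$ and $B$ have bases via the Fundamental Theorem of Abelian Groups, with the dihedral case handled as the special instance $\mathbb{Z}_n \rtimes \mathbb{Z}_2$.
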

\end{subsection}

\begin{subsection}{Special cases of solvable groups}

Proposition~\ref{prop:quaternions} demonstrates that not every solvable group has a basis.  Still, we can demonstrate some special cases when solvable groups have bases. Recall that a subgroup $H$ of a finite group $G$ is a \emph{Hall $\pi$-subgroup} for a set of primes $\pi$ if $|H|$ and $|G:H|$ are coprime and every prime divisor of $|H|$ is in $\pi$, and a result of Philip Hall~\cite[Theorem~8.9]{Isaacs1994} states that every solvable $G$ has a Hall $\pi$-subgroup for any set of primes $\pi$.

\begin{theorem}\label{thm:Solvable}
Let $G$ be a finite solvable group.  If every Sylow subgroup of $G$ has a basis, then $G$ has a basis.
\end{theorem}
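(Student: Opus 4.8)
The plan is to build the basis for $G$ one Sylow subgroup at a time, peeling off Hall subgroups and applying Lemma~\ref{lem:SubgroupCoset} repeatedly. Write $|G| = p_1^{a_1} \cdots p_r^{a_r}$ with the $p_i$ distinct primes, and for each $k$ let $\pi_k = \{p_1, \ldots, p_k\}$. Since $G$ is solvable, Philip Hall's theorem gives a Hall $\pi_k$-subgroup $G_k$ for each $k$, and we may arrange $\{e\} = G_0 \le G_1 \le \cdots \le G_r = G$; indeed $G_k$ is a Hall $\pi_k$-subgroup of $G_{k+1}$, and $|G_{k}:G_{k-1}| = p_{k}^{a_{k}}$. (If a choice of $G_k$ inside $G_{k+1}$ is not automatically nested, one may instead induct: $G_{r-1}$ is a Hall $\pi_{r-1}$-subgroup of $G$, it is solvable, each of its Sylow subgroups is a Sylow subgroup of $G$ and hence has a basis, so by induction $G_{r-1}$ has a basis.)

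The inductive step is the heart of the argument. Suppose $H := G_{r-1}$ has a basis $(h_1, \ldots, h_m)$; we want to extend it to a basis of $G$ using generators coming from a Sylow $p_r$-subgroup $P$ of $G$. By hypothesis $P$ has a basis $(b_1, \ldots, b_n)$, so $|b_1| \cdots |b_n| = |P| = p_r^{a_r} = |G:H|$, which is exactly Item~(\ref{item:size}) of Lemma~\ref{lem:SubgroupCoset}. Each $b_i$ is a $p_r$-element, hence $b_i \notin H$ (as $p_r \nmid |H|$), and more generally every nontrivial product $b_1^{c_1} \cdots b_n^{c_n}$ lies in $P \setminus \{e\}$, hence outside $H$ since $H \cap P = \{e\}$. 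Item~(\ref{item:powers}) — that $b_1^{c_1}\cdots b_n^{c_n} = b_1^{k_1}\cdots b_n^{k_n}$ forces $(c_i) = (k_i)$ — is immediate from $(b_1, \ldots, b_n)$ being a basis of $P$. For Item~(\ref{item:coset}): if $x = b_1^{c_1}\cdots b_n^{c_n}$ and $y = b_1^{k_1}\cdots b_n^{k_n}$ satisfy $xy^{-1} \in H$, then since $x, y \in P$ we get $xy^{-1} \in H \cap P = \{e\}$, so $x = y$; the converse is trivial. Lemma~\ref{lem:SubgroupCoset} then yields that $(h_1, \ldots, h_m, b_1, \ldots, b_n)$ is a basis for $G$.

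The main obstacle — and the only genuinely delicate point — is the claim that a product $b_1^{c_1} \cdots b_n^{c_n}$ of elements of the Sylow $p_r$-subgroup $P$ again lies in $P$, and that $H \cap P = \{e\}$. The second is immediate from coprimality of $|H| = |G:P|$ and $|P|$. The first is immediate because $P$ is a subgroup: every $b_i \in P$, so the whole product lies in $P$. I would state these observations explicitly rather than hiding them, since they are exactly what makes the three hypotheses of Lemma~\ref{lem:SubgroupCoset} hold. Everything else is bookkeeping: the base case $r = 1$ says $G = G$ is a $p$-group equal to its own Sylow subgroup, which has a basis by assumption, and the nesting of Hall subgroups is standard for solvable groups. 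I expect the write-up to be short, essentially a single application of Lemma~\ref{lem:SubgroupCoset} wrapped in an induction on the number of prime divisors of $|G|$.
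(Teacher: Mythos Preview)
Your proposal is correct and follows essentially the same route as the paper: induct on the number of prime divisors of $|G|$, pass to a Hall $p'$-subgroup $H$ (which has a basis by induction, since its Sylow subgroups are Sylow in $G$), take a basis of a Sylow $p$-subgroup $P$, and apply Lemma~\ref{lem:SubgroupCoset}. The paper simply asserts that ``it is easy to check'' the hypotheses of the lemma, whereas you spell out the verification via $H \cap P = \{e\}$ and closure of $P$; your parenthetical remark about induction versus a nested chain of Hall subgroups is the right call, and matches the paper's cleaner formulation.
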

\begin{proof}
Let $\pi(G)$ denote the primes that divide $|G|$.  We will proceed by induction.  If $|\pi(G)|=1$, then $G$ is equal to its sole Sylow subgroup, and thus has a basis by assumption.

Now suppose $|\pi(G)| > 1$.  Let $p \in \pi(G)$ and $\pi'(G) = \pi(G)-\{p\}$.  Because $G$ is solvable, $G$ has a proper Hall $\pi'(G)$-subgroup $H$.  By induction, $H$ has a basis $(h_1,\ldots,h_m)$.  Let $P$ be a Sylow $p$-subgroup of $G$; then $P$ has a basis $(b_1,\ldots,b_n)$ by assumption.  It is easy to check that $H$ and $(b_1,\ldots,b_n)$ fulfill the hypotheses of Lemma~\ref{lem:SubgroupCoset}, so $G$ has a basis.
\end{proof}

Recall that an integer $m$ is \emph{cube-free} if $p^3$ fails to divide $m$ for every prime $p$.  

\begin{corollary}  
Let $m$ be an odd integer that is cube-free.  If $G$ has order $m$ or $2m$, then $G$ has a basis.
\end{corollary}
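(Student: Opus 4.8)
The plan is to deduce this from Theorem~\ref{thm:Solvable}, so the two things to establish are that $G$ is solvable and that every Sylow subgroup of $G$ has a basis. Throughout, write $|G| = m$ or $|G| = 2m$ with $m$ odd and cube-free; in either case $|G|$ is itself cube-free, since $2$ divides $|G|$ at most once and every odd prime divides $m$ at most twice.

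I would first dispatch the Sylow condition, which is the routine part. Let $p$ be a prime dividing $|G|$ and let $P$ be a Sylow $p$-subgroup. If $p = 2$ then $|P| = 2$ (as $m$ is odd), so $P$ is cyclic and hence has a basis. If $p$ is odd then $|P| \in \{p, p^2\}$ because $|G|$ is cube-free; a group of order $p$ is cyclic and a group of order $p^2$ is abelian, so in both cases $P$ has a basis by the Fundamental Theorem of Abelian Groups as recorded in the introduction. Thus every Sylow subgroup of $G$ has a basis regardless of which case we are in.

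Next I would prove $G$ is solvable. If $|G| = m$ is odd, this is immediate from the Feit--Thompson Odd Order Theorem. If $|G| = 2m$, I would first extract a normal subgroup of order $m$: view $G$ inside $S_{2m}$ via its regular representation; any $t \in G$ of order $2$ acts on $G$ without fixed points, hence as a product of $m$ transpositions, which is an odd permutation because $m$ is odd. Therefore $G \not\subseteq A_{2m}$, so $N := G \cap A_{2m}$ is a normal subgroup of $G$ of index $2$ and order $m$. Now $N$ has odd order and is solvable by Feit--Thompson, while $G/N \cong \mathbb{Z}_2$ is solvable, and solvability is closed under extensions, so $G$ is solvable. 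With $G$ solvable and all of its Sylow subgroups possessing bases, Theorem~\ref{thm:Solvable} produces a basis for $G$.

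The main obstacle—indeed essentially the only nonroutine ingredient—is the solvability of $G$. The odd-order case rests squarely on the Feit--Thompson theorem, and the order-$2m$ case additionally needs the parity observation that a fixed-point-free involution in $S_{2m}$ is an odd permutation exactly when $m$ is odd, which is what yields the index-$2$ subgroup of odd order that Feit--Thompson then applies to. I note that the hypothesis that $m$ is odd cannot be dropped in favour of mere cube-freeness: $A_5$ has cube-free order $60$, is simple, and has no basis obstruction of the present kind available, so some genuine input beyond the counting arguments of Theorem~\ref{thm:Solvable} is unavoidable here.
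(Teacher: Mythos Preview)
Your proof is correct and follows essentially the same route as the paper: verify that all Sylow subgroups are abelian (hence have bases) using cube-freeness, establish solvability via Feit--Thompson in the odd case and via an index-$2$ normal subgroup of odd order in the $2m$ case, and then invoke Theorem~\ref{thm:Solvable}. The only difference is cosmetic---where the paper cites Isaacs for the existence of the index-$2$ normal subgroup and for closure of solvability under extensions, you write out the standard Cayley-embedding/sign argument explicitly.
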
  
\begin{proof}
Let $p$ be a prime number that divides $|G|$, and let $P$ be a Sylow $p$-subgroup of $G$.  Then $P$ must have order at most $p^2$ because $|G|$ is cube-free, which implies that $P$ is abelian and thus has a basis.  If $|G|=m$,  then $G$ is solvable by the Odd Order Theorem~\cite{FeitThompson}.  If $|G|=2m$,  then $G$ is solvable by the Odd Order Theorem,~\cite[Corollary~6.12]{Isaacs1994}, and~\cite[Corollary~8.4]{Isaacs1994}.  Theorem~\ref{thm:Solvable} then implies $G$ has a basis.
\end{proof}

\end{subsection}

\begin{section}{Open Questions}

We close with several open questions about group bases.

\begin{enumerate}
\item We have seen that $Q_8$ does not have a basis.   When exactly does a nilpotent group have a basis?   
\item Similarly, when does a solvable group have a basis?   When does a non-solvable group have a basis?
\end{enumerate}
\end{section}


\end{section}
\bibliographystyle{amsplain}
\bibliography{CryptoBibliography} 

\end{document}